\newcommand{\bX}{{\mathbf X}}
\newcommand{\DIV}{\nabla\!{\cdot}}   
\newcommand{\ROT}{\nabla\!\times\!}  
\newcommand{\GRAD}{\nabla}           
\newcommand{\LAP}{{\Delta}}          
\newcommand{\ADV}{{\cdot}\nabla} 
\newcommand{\Real}{\mathbb R}
\newcommand{\vare}{{\varepsilon}}
\newcommand{\dt}{{\tau}}
\newcommand{\ie}{i.e.,\@\xspace}
\newcommand{\cf}{cf.\@\xspace}
\newcommand{\pe}{\textup{\textsf{p}}}
\newcommand{\ue}{\textup{\textsf{u}}}
\newcommand{\we}{\textup{\textsf{w}}}
\def\Hun{{{    H}^{1}(\Omega)}}
\def\Hunz{{{   H}^1_0 (\Omega)}}
\newcommand{\frakm}{{\mathfrak m}}
\newcommand{\fraki}{{\mathfrak i}}
\newcommand{\polQ}{{\mathbb Q}}
\newcommand{\calC}{{\mathcal C}}
\newcommand{\hunzd}{{\mathbf{H}_0^1}}
\newcommand{\hunz}{{H_0^1}}
\newcommand{\bL}{{\mathbf L}}
\def\scl{\left\langle}
\def\scr{\right\rangle}
\def\tildeLdeux{{L^2_{\scriptscriptstyle\!\int\!=0} (\Omega)}}
\def\Ldeux{{{  L}^2   (\Omega)}}
\newcommand{\polN}{{\mathbb N}}
\newcommand{\frakd}{{\mathfrak d}}
\def\Hunzd{{{  \bf H}^1_0 (\Omega)}}
\newcommand{\bH}{{\mathbf H}}
\def\Hdeuxd{{{   \bf H}^2   (\Omega)}}
\newcommand{\bW}{{\mathbf W}}
\newcommand{\calE}{{\mathcal E}}
\newcommand{\calO}{{\mathcal O}}
\numberwithin{equation}{section}
\theoremstyle{plain}
\newtheorem{theorem}[equation]{Theorem}
\newtheorem{lemma}[equation]{Lemma}
\newtheorem{corollary}[equation]{Corollary}
\newtheorem{proposition}[equation]{Proposition}
\theoremstyle{definition}
\newtheorem{remark}[equation]{Remark}
\newtheorem*{acknowledgement*}{Acknowledgement}
\begin{document}

\title[Fluids with Microstructure]{Convergence analysis of a fractional time-stepping technique for incompressible fluids with microstructure}

\author[A.J.~Salgado]{Abner J.~Salgado}
\address{Department of Mathematics, University of Tennessee, Knoxville, TN 37996, USA.}
\email{asalgad1@utk.edu}

\subjclass[2000]{35Q30,    
65N12,    
65N15,    
65N30,    
76A05,    
76M10.    
}

\date{\textcolor{blue}{Submitted \today}}

\keywords{Micropolar Navier Stokes; Fractional Time Stepping; Micropolar Flows, Fluids with Microstructure}

\begin{abstract}
We present and analyze a fully discrete fractional time stepping technique for the solution of the micropolar Navier Stokes equations, which is a system of equations that describes the evolution of an incompressible fluid whose material particles possess both translational and rotational degrees of freedom. The proposed scheme uncouples the computation of the linear and angular velocity and the pressure. It is unconditionally stable and delivers optimal convergence rates.
\end{abstract}

\maketitle

\section{Introduction}
\label{sec:Intro}
One of the fundamental assumptions in fluid mechanics is that material particles do not possess angular momentum and that there are no distributed couples. As a consequence of this we obtain that the stress tensor is symmetric. However, this approach is not satisfactory in the case when the orientability of the material particles is important for the process of interest. This is the case, for instance, when dealing with anisotropic or polarizable fluids and so the conservation of angular momentum must be taken explicitly into account to describe the behavior of such fluid.

This work is concerned with the development of a fractional time stepping technique for the so-called micropolar Navier Stokes equations, which read
\begin{equation}
\label{eq:mupnse}
  \begin{dcases}
    \ue_t + \ue\ADV \ue -(\nu + \nu_r) \LAP \ue + \GRAD \pe = 2\nu_r \ROT \we + f, \\
    \DIV \ue = 0, \\
    \mathfrak{j} \we + \mathfrak{j} \ue \ADV \we - (c_a + c_d) \LAP \we - (c_0 + c_d - c_a) \GRAD \DIV \we + 4 \nu_r \we = 2 \nu_r \ROT \ue + g,
  \end{dcases}
\end{equation}
where $\ue$ is the linear velocity of the fluid, $\pe$ the pressure and $\we$ is the angular velocity. All the material constants $\mathfrak{j}$, $\nu$, $\nu_r$, $c_a$, $c_d$ and $c_0$ are assumed to be constant, positive and satisfy
\[
  c_0 + c_d - c_a > 0.
\]
The terms $f$ and $g$ represent a smooth, externally applied, force and moment, respectively. The fluid occupies a bounded domain $\Omega \subset \Real^d$ with $d=2,3$ and a solution to the above problem is sought over the time interval $[0,T]$. We supplement the system \eqref{eq:mupnse} with initial and boundary conditions for the linear and angular velocities:
\begin{equation}
\label{eq:IBCs}
  \begin{dcases}
    \ue|_{t=0} = \ue_0, & \ue|_{\partial\Omega} = 0, \\
    \we|_{t=0} = \we_0, & \we|_{\partial\Omega} = 0.
  \end{dcases}
\end{equation}

System \eqref{eq:mupnse}--\eqref{eq:IBCs} describes, under certain constitutive assumptions \cite{NST,Luka}, the evolution of an incompressible fluid in which the material particles possess rotational degrees of freedom. The origin of this system can be traced back to \cite{Dahler1,Dahler2} and \cite{EringMicro,Eringen99,Eringen01}, where a general theory of continua with microstructure is developed. The interested reader is also referred to \cite{Luka} for an analysis of this model. Let us also mention that this system is a component in the so-called Rosensweig model for ferrofluids \cite{Ros97}, where the system is coupled to the (stationary) Maxwell equations, and an equation for the so-called magnetization field through the forcing terms $f$ and $g$ which depend on the magnetic field and magnetization. An analysis of the Rosensweig model of ferrohydrodynamics can be found in \cite{MR2430804,Ami10}. We finally mention that potential applications of ferrofluids, and thus of the micropolar Navier Stokes equations, are discussed in \cite{NST}.

Despite the fact that, as mentioned in the previous paragraph, system \eqref{eq:mupnse} has a great deal of practical applications, to the best of our knowledge only two references deal with its discretization: \cite{Ort08} proposes and analyzes a penalty projection-method and suboptimal error estimates are proved.
Reference \cite{NST}, develops a semi-implicit fully discrete scheme which, at each time step, decoupled the computation of the linear and angular velocities but required the solution of a saddle-point problem for the determination of the linear velocity and pressure. In other words, if we knew $(\ue^l,\pe^l,\we^l)$ for all $l=\overline{0,k}$ we found first $(\ue^{k+1},\pe^{k+1})$ by taking the angular velocity explicit and solving the corresponding saddle point problem (first two lines of \eqref{eq:mupnse}). Once the linear velocity $\ue^{k+1}$ is known we can solve the convection--diffusion equation (last line in \eqref{eq:mupnse}) that determines $\we^{k+1}$. The authors of this work show that this method is unconditionally stable and delivers optimal error estimates. Its main bottleneck, however, was the solution of the saddle point problem. In this work we remedy this shortcoming by applying a fractional time stepping technique.

Fractional time stepping techniques date back to the late 1960's and the work of Chorin \cite{Chor68,Chor69} and Temam \cite{MR0237972,tema69} and are among the most popular methods for the solution of the incompressible Navier Stokes equations. A detailed exposition on this techniques can be found in \cite{MR2250931}. Simply put these methods are based on a realization of the Helmholtz decomposition of the velocity into a solenoidal field and a gradient. Another, more recent, point of view on fractional time stepping techniques has emerged \cite{Guermond20092834,MR2802553} by which these schemes are understood as a penalization on the divergence in a negative norm. This is the point of view that we shall adopt and, in this sense, our work will be a combination of the techniques and ideas of \cite{NST} and \cite{MR2802553}.

This work is organized as follows. The notation, as well as the particulars of the time and space discretization are described in Section~\ref{sec:prelims}. Section~\ref{sec:Scheme} presents the scheme that we will be concerned with as well as its stability analysis. On the basis of the stability estimates the error analysis is shown in Section~\ref{sec:Errors}, where we obtain optimal error estimates, both in time and space, for the linear and angular velocities, as well as for the pressure. Finally, to illustrate the performance of the method numerical experiments are shown in Section~\ref{sec:Numerics}.

\section{Notation and Preliminaries}
\label{sec:prelims}

We will consider system \eqref{eq:mupnse}--\eqref{eq:IBCs} on the finite time interval $[0,T]$ and in an open, connected and bounded domain $\Omega \subset \Real^d$ ($d=2,3$) which, for simplicity, we assume is sufficiently smooth. Moreover we assume that \eqref{eq:mupnse}--\eqref{eq:IBCs} has a unique and sufficiently smooth solution, see \S~\ref{sub:vels} and \S~\ref{sub:preserr} for a precise statement. To simplify notation we will set
\[
  \nu_0 = \nu + \nu_r, \quad c_1 = c_a + c_d, \quad c_2 = c_0 + c_d - c_a.
\]

Whenever $X$ is a normed space, we denote by $\|\cdot\|_X$ its norm and by $X'$ its dual. We say that a time dependent function $\phi : [0,T] \rightarrow X$ is in $L^p(X)$ whenever the map $(0,T) \ni t \mapsto \|\phi(t)\|_X^p \in \Real$ is integrable. No notational distinction is made for vector valued functions but their spaces will be denoted by bold fonts. We use the standard Sobolev spaces $W^k_p(\Omega)$ consisting of functions whose distributional derivatives of order up to $k$ are in $L^p(\Omega)$. As usual, we set $\Hun = W^1_2(\Omega)$ and $\Hunz$ as the closure of $\calC^\infty_0(\Omega)$ in $\Hun$ which we norm with $\|v \|_{\hunz} = \| \GRAD v\|_{\bL^2}$. We denote the space of $\Ldeux$ functions with vanishing mean value by $\tildeLdeux$. The inner product in $\Ldeux$ is denoted by $\scl q, r \scr = \int_\Omega qr$.

In what follows the relation $ A \lesssim B$ means that $A \leq c B$ for a constant that might depend on the data of the problem and its exact solution, but it does not depend on the discretization parameters nor the solution of the discrete scheme. The value of this constant might change at each occurrence.

\subsection{Time Discretization}
\label{sub:time}
We discretize the time interval $[0,T]$ by introducing a number of steps $K \in \polN$ so that the time-step is $\dt = T/K$ and we set $t_k = k \tau$ for $k=\overline{0,K}$. We denote $\phi^k := \phi(t_k)$ and $\phi^\dt = \{ \phi^k \}_{k=0}^K$. The time increment operator $\frakd$ is defined by setting
\begin{equation}
\label{eq:defoffrakd}
  \frakd \phi^k = \phi^k - \phi^{k-1},
\end{equation}
with $\frakd^2\phi^k = \frakd(\frakd \phi^k) = \phi^k - 2\phi^{k-1} + \phi^{k-2}$, and we define the discrete norms
\begin{equation}
\label{eq:l2linf}
  \| \phi^\dt \|_{\ell^2(X)}^2 = \dt \sum_{k=0}^K \| \phi^k \|_X^2, \qquad
  \| \phi^\dt \|_{\ell^\infty(X)} = \max\left\{ \| \phi^k \|_X : k=\overline{0,K} \right\}.
\end{equation}

\subsection{Space Discretization}
\label{sub:space}
We will approximate the solution to \eqref{eq:mupnse}--\eqref{eq:IBCs} with Galerkin techniques. We introduce two families of finite dimensional spaces $\{\bX_h\}_{h>0}$ and $\{M_h\}_{h>0}$ with $\bX_h \subset \Hunzd$ and $M_h \subset \tildeLdeux$. The spaces $\bX_h$ will be used to approximate the linear and angular velocity, while the pressure will be approximated in $M_h$. The pair $(\bX_h,M_h)$ must be compatible, in the sense that they satisfy the well known LBB condition 
\begin{equation}
\label{eq:LBB}
  \|q_h \|_{L^2} \lesssim \sup \left\{ \frac{ \int_\Omega q_h \DIV v_h }{ \|v_h\|_{\hunzd} } : 0 \neq v_h \in \bX_h \right\} , \quad \forall q_h \in M_h.
\end{equation}
In addition, these spaces possess suitable approximation properties, \ie there is $\frakm \in \polN$ such that for all $m \in [0,\frakm]$ we have
\begin{equation}
\label{eq:Xhapprox}
  \inf\left\{ \| v - v_h \|_{\bL^2} + h \| v - v_h \|_{\hunzd} : v_h \in \bX_h  \right\} \lesssim h^{m+1} \| v \|_{\bH^{m+1}},
\end{equation}
for all $v \in \bH^{m+1}(\Omega) \cap \Hunzd$, and
\begin{equation}
\label{eq:Mhapprox}
  \inf\left\{ \| q - q_h \|_{\bL^2}  : q_h \in M_h  \right\} \lesssim h^{m} \| q \|_{H^{m}}, \quad
  \forall q \in H^{m}(\Omega) \cap \tildeLdeux.
\end{equation}
Finally, we assume that the velocity space satisfies the following inverse inequality
\begin{equation}
\label{eq:inverseineq}
  \| v_h \|_{\bL^\infty} \lesssim \fraki(h) \|v_h \|_{\hunzd}, \quad \forall v_h \in \bX_h, \qquad
  \fraki(h) = \begin{dcases}
              \left( 1 + |\log(h)| \right)^{1/2}, & d = 2, \\
              h^{-1/2}, & d = 3.
            \end{dcases}
\end{equation}
In the context of finite elements, examples of spaces satisfying these properties can be found in \cite{GR86,MR2050138}.

Following the ideas of \cite{MR0351124,MR2249024}, for $t \in (0,T]$, we define the Stokes projection of $(\ue(t),\pe(t))$ as the pair $(\ue_h(t),\pe_h(t)) \in \bX_h \times M_h$ such that, for every $(v_h , q_h) \in \bX_h \times M_h$, satisfies
\begin{equation}
\label{eq:defStokesproj}
  \begin{dcases}
    \nu_0 \scl \GRAD \ue_h(t), \GRAD v_h \scr + \scl \GRAD \pe_h(t), v_h \scr = \nu_0 \scl \GRAD \ue(t), \GRAD v_h \scr - \scl \pe(t), \DIV v_h \scr,
    \\
    \scl q_h, \DIV \ue_h(t) \scr = 0,
  \end{dcases}
\end{equation}
and the elliptic-like projection of $\we(t)$ as the function $\we_h(t) \in \bX_h$ that solves
\begin{multline}
\label{eq:defofEllproj}
  c_1 \scl \GRAD \we_h(t), \GRAD z_h \scr + c_2 \scl \DIV \we_h(t), \DIV z_h \scr + 4\nu_r \scl \we_h(t), z_h \scr = 
  c_1 \scl \GRAD \we(t), \GRAD z_h \scr + \\
  c_2 \scl \DIV \we(t), \DIV z_h \scr + 4\nu_r \scl \we(t), z_h \scr,
  \quad \forall z_h \in \bX_h.
\end{multline}
We assume that these projectors satisfy:

\begin{lemma}[Properties of projectors]
\label{lem:projs}
Assume that $(\ue,\we,\pe) \in L^\infty(\Hdeuxd\cap\Hunzd)^2 \times L^\infty(\Hun\cap\tildeLdeux)$, then the Stokes and elliptic projectors are stable, \ie
\[
  \| \ue_h \|_{L^\infty(\bL^\infty \cap \bW^1_3)} + \| \we_h \|_{L^\infty(\bL^\infty \cap \bW^1_3)} + 
  \| \pe_h \|_{L^\infty(H^1)} \lesssim 1.
\]
If, in addition, $(\ue,\we,\pe) \in L^\infty(\bH^{\frakm+1}(\Omega)\cap\Hunzd)^2 \times L^\infty(H^{\frakm}(\Omega)\cap\tildeLdeux)$ then the projectors have the following approximation properties
\begin{align*}
  \| \ue - \ue_h \|_{L^\infty(\bL^2)} + h \| \ue - \ue_h \|_{L^2(\hunzd)} + h\| \pe - \pe_h \|_{L^\infty(L^2)} &\lesssim h^{\frakm+1}, \\
  \| \we - \we_h \|_{L^\infty(\bL^2)} + h \| \we - \we_h \|_{L^2(\hunzd)} &\lesssim h^{\frakm+1}.
\end{align*}
\end{lemma}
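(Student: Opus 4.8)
The plan is to handle the two projections separately and, within each, to derive first the approximation estimates by the standard energy-plus-duality machinery and then the stability estimates by combining Sobolev embeddings for the exact solution with the known $\bL^\infty$ and $\bW^1_3$ stability of the underlying finite element projections. Throughout, all constants are uniform in $t$ because the assumed regularity of $(\ue,\we,\pe)$ is uniform in $t$; hence every pointwise-in-$t$ bound yields the corresponding $L^\infty$-in-time bound, and every $\bH^1$ bound integrated over $[0,T]$ yields the $L^2(\hunzd)$ bound.

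I would begin with the elliptic projection $\we_h$, which is the cleaner of the two. The bilinear form
\[
  a(w,z) = c_1 \scl \GRAD w, \GRAD z \scr + c_2 \scl \DIV w, \DIV z \scr + 4\nu_r \scl w, z \scr
\]
is bounded on $\Hunzd$ and, since $c_1,c_2,\nu_r>0$, coercive, $a(w,w) \ge c_1 \|w\|_{\hunzd}^2$. Thus \eqref{eq:defofEllproj} is well posed, and the Galerkin orthogonality $a(\we-\we_h,z_h)=0$ with C\'ea's lemma and \eqref{eq:Xhapprox} (taking $m=\frakm$) gives $\|\we-\we_h\|_{\hunzd} \lesssim h^{\frakm}\|\we\|_{\bH^{\frakm+1}}$. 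The $\bL^2$ estimate follows by Aubin--Nitsche duality: introducing the adjoint solution $\phi$ of $a(\phi,z)=\scl \we-\we_h,z\scr$, elliptic regularity on the smooth domain $\Omega$ gives $\|\phi\|_{\Hdeuxd}\lesssim\|\we-\we_h\|_{\bL^2}$, and testing with $e=\we-\we_h$, using orthogonality and the $\bH^1$ bound, yields $\|\we-\we_h\|_{\bL^2}\lesssim h^{\frakm+1}$.

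For the Stokes pair $(\ue_h,\pe_h)$ I would invoke the LBB-based theory of \cite{MR0351124,MR2249024}. Well posedness of \eqref{eq:defStokesproj} and the quasi-optimal bound $\|\ue-\ue_h\|_{\hunzd} \lesssim \inf_{v_h}\|\ue-v_h\|_{\hunzd} + \inf_{q_h}\|\pe-q_h\|_{L^2}$ come from the inf--sup framework \eqref{eq:LBB}; with \eqref{eq:Xhapprox}--\eqref{eq:Mhapprox} this gives $\|\ue-\ue_h\|_{\hunzd}\lesssim h^{\frakm}$. The pressure bound $\|\pe-\pe_h\|_{L^2}\lesssim h^{\frakm}$ is recovered from \eqref{eq:LBB} applied to the residual momentum equation, and the $\bL^2$ velocity bound $\|\ue-\ue_h\|_{\bL^2}\lesssim h^{\frakm+1}$ from a second Aubin--Nitsche argument for the dual Stokes problem, using its $\Hdeuxd\times\Hun$ regularity. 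These are precisely the estimates of \cite{MR0351124,MR2249024}.

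The genuinely delicate part, which I expect to be the main obstacle, is the stability in the stronger norms. For the exact solution the hypothesis $(\ue,\we)\in L^\infty(\Hdeuxd)^2$ and the Sobolev embeddings $\bH^2(\Omega)\hookrightarrow\bL^\infty(\Omega)\cap\bW^1_3(\Omega)$, valid for $d\le3$, already give $\|\ue\|_{L^\infty(\bL^\infty\cap\bW^1_3)}+\|\we\|_{L^\infty(\bL^\infty\cap\bW^1_3)}\lesssim1$, while $\pe\in L^\infty(H^1)$ directly. To transfer this to the discrete functions I would appeal, for the velocities, to the maximum-norm and $\bW^1_p$ stability of the Stokes and Ritz projections, $\|\ue_h(t)\|_{\bL^\infty\cap\bW^1_3}\lesssim\|\ue(t)\|_{\bL^\infty\cap\bW^1_3}$ and likewise for $\we_h$, which hold for LBB-stable families on quasi-uniform meshes \cite{GR86}; these results, and not any elementary energy argument, are what carry the stability proof. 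For the pressure I would compare with a quasi-interpolant $\Pi_h\pe\in M_h$ with $\|\pe-\Pi_h\pe\|_{L^2}\lesssim h\|\pe\|_{H^1}$ and $\|\Pi_h\pe\|_{H^1}\lesssim\|\pe\|_{H^1}$: since $\|\pe_h-\Pi_h\pe\|_{L^2}\le\|\pe_h-\pe\|_{L^2}+\|\pe-\Pi_h\pe\|_{L^2}\lesssim h^{\frakm}+h\lesssim h$ (using $\frakm\ge1$), a standard inverse estimate on the quasi-uniform mesh gives $\|\pe_h-\Pi_h\pe\|_{H^1}\lesssim h^{-1}\|\pe_h-\Pi_h\pe\|_{L^2}\lesssim1$, whence $\|\pe_h\|_{H^1}\lesssim1$. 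Taking the supremum in $t$ throughout completes the argument.
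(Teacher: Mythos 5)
The paper never proves this lemma: the sentence immediately preceding it reads ``We assume that these projectors satisfy,'' so Lemma~\ref{lem:projs} is a standing hypothesis on the pair $(\bX_h,M_h)$, with the projections \eqref{eq:defStokesproj}--\eqref{eq:defofEllproj} defined ``following the ideas of'' \cite{MR0351124,MR2249024}. Your proposal therefore supplies what the paper delegates to the literature, and its approximation half is the right argument and is correct: coercivity of $a(\cdot,\cdot)$, C\'ea's lemma with \eqref{eq:Xhapprox}, and Aubin--Nitsche duality (using $\bH^2$ regularity of the elliptic system $-c_1\LAP - c_2\GRAD\DIV + 4\nu_r$ on the smooth domain) handle $\we_h$; the inf-sup framework \eqref{eq:LBB} plus a dual Stokes problem handle $(\ue_h,\pe_h)$, exactly as in the cited references.

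On the stability half, which you correctly single out as the delicate part, two caveats. First, the max-norm and $\bW^1_p$ stability results for the Stokes projection are not velocity-to-velocity as you stated them: the known bounds couple velocity and pressure, roughly $\|\ue_h\|_{\bW^1_3} + \|\pe_h\|_{L^3} \lesssim \|\ue\|_{\bW^1_3} + \|\pe\|_{L^3}$, and they carry quasi-uniformity-type mesh hypotheses that the paper nowhere states (though its assumed inverse inequality \eqref{eq:inverseineq} implicitly encodes such a restriction). Second, that heavy machinery is avoidable: the elementary device you use for the pressure works verbatim for the velocities. With an interpolant $I_h$, write $\|\ue_h\|_{\bW^1_3} \le \|I_h\ue\|_{\bW^1_3} + \|\ue_h - I_h\ue\|_{\bW^1_3}$, bound the second term by $h^{-d/6}\|\ue_h - I_h\ue\|_{\hunzd} \lesssim h^{-d/6}\, h\, \|\ue\|_{\bH^2} \lesssim 1$ via an inverse estimate and the order-$h$ energy error bound (available under the hypothesis $\ue \in L^\infty(\Hdeuxd)$), and likewise $\|\ue_h - I_h\ue\|_{\bL^\infty} \lesssim \fraki(h)\, h \lesssim 1$ directly from \eqref{eq:inverseineq}; this keeps the entire lemma within the assumptions the paper already places on $(\bX_h,M_h)$, and is more consistent with your own treatment of $\pe_h$. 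Finally, a small slip: under the stability hypothesis alone $\pe$ is only in $H^1$, so the pressure error estimate you may invoke there is $\|\pe - \pe_h\|_{L^2} \lesssim h$ (the case $m=1$), not $h^{\frakm}$; your conclusion is unaffected since order $h$ is all you use.
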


Finally, we introduce the discrete trilinear form \cite{Temam}
\[
  b_h(u,v,w) = \scl (u \ADV v) , w \scr + \frac12 \scl (\DIV u) v , w \scr, \quad \forall u,v,w \in \Hunzd,
\]
and recall that it is consistent: $b_h(u,v,w) = \scl (u \ADV v) , w \scr$ whenever $\DIV u = 0$ and skew-symmetric, \ie
\[
  b(u,v,v)=0, \quad \forall u,v \in \Hunzd.
\]
The following are well-known bounds for this form:
\begin{align}
\label{eq:1stbineq}
  b_h(u_h, v_h w_h ) &\lesssim \| u_h \|_{\hunzd} \| v_h \|_{\hunzd} \| w_h \|_{\hunzd}, \\
\label{eq:2ndbineq}
  b_h(u_h,v_h,w_h) &\lesssim \fraki(h) \| u_h \|_{\bL^2} \| v_h \|_{\hunzd} \| w_h \|_{\hunzd},
\end{align}
for all $u_h,v_h,w_h \in \bX_h$.

\begin{remark}[$d=2$]
There is a slight difference between \eqref{eq:mupnse} in two and three dimensions. Namely, if $d=2$ the field $\we$ is a (pseudo)scalar whereas if $d=3$ is a (pseudo)vector. This inconsistency must be taken into account in the choice of discrete spaces for the angular velocity. In what follows we will develop the scheme and carry out the analysis under the assumption that $d=3$. In the two dimensional case the angular velocity must be taken from a discrete (scalar valued) space $X_h$ and all the analysis presented below follows after minor modifications.
\end{remark}

\section{Description of the Scheme}
\label{sec:Scheme}
Let us now describe the scheme. As mentioned in Section~\ref{sec:Intro}, our method will decouple the linear and angular velocities as in \cite{NST} and the linear velocity and pressure with a fractional time stepping technique as in \cite{MR2802553}. We compute three sequences $u_h^\tau$, $w_h^\tau$ and $p_h^\tau$ which approximate, respectively, the linear and angular velocities and the pressure. 

Given the initial data $(\ue_0,\we_0)$ we construct $(u_h^0,w_h^0,p_h^0) \in \bX_h^2 \times M_h$ that approximate, respectively, the initial linear and angular velocities and pressure. We assume that these approximations satisfy
\begin{align*}
  \| \ue_0 - u_h^0 \|_{\bL^2} + h \| \ue_0 - u_h^0 \|_{\hunzd} &\lesssim h^{\frakm+1}, \\
  \| \we_0 - w_h^0 \|_{\bL^2} + h \| \we_0 - w_h^0 \|_{\hunzd} &\lesssim h^{\frakm+1}, \\
  \| \pe_0 - p_h^0 \|_{L^2} &\lesssim h^{\frakm+1}.
\end{align*}
After initialization, for $k=\overline{0,K-1}$, we advance in time in several stages:
\begin{enumerate}[$\bullet$]
  \item \textbf{Pressure extrapolation:} Define
  \begin{equation}
  \label{eq:psharp}
    p_h^\sharp = \begin{dcases}
                   p_h^k, & k=0, \\
                   2 p_h^k - p_h^{k-1}, & k >0.
                 \end{dcases}
  \end{equation}
  
  \item \textbf{Linear velocity update:} Find $u_h^{k+1} \in \bX_h$ that solves
  \begin{multline}
  \label{eq:velupdate}
    \scl \frac{\frakd u_h^{k+1}}\dt, v_h \scr + b_h(u_h^k, u_h^{k+1}, v_h) + \nu_0 \scl \GRAD u_h^{k+1}, \GRAD v_h \scr + \scl \GRAD p_h^\sharp, v_h \scr 
    = \\
    2 \nu_r \scl \ROT w_h^k, v_h \scr + \scl f^{k+1}, v_h \scr - \scl \GRAD \frakd \psi_h^{k+1}, v_h \scr,
    \quad v_h \in \bX_h
  \end{multline}

  \item \textbf{Pressure update:} Find $p_h^{k+1} \in M_h$ that solves
  \begin{equation}
  \label{eq:presupdate}
    \scl \GRAD \frakd p_h^{k+1}, \GRAD r_h \scr = \frac1\dt \scl u_h^{k+1}, \GRAD r_h \scr + \scl \GRAD \psi_h^{k+1}, \GRAD r_h \scr,
    \quad \forall r_h \in M_h.
  \end{equation}

  \item \textbf{Angular velocity update:} Find $w_h^{k+1} \in \bX_h$ that solves
  \begin{multline}
  \label{eq:wupdate}
    \mathfrak{j} \scl \frac{\frakd w_h^{k+1}}\dt, z_h \scr + \mathfrak{j} b_h (u_h^{k+1}, w_h^{k+1}, z_h ) + c_1 \scl \GRAD w_h^{k+1}, \GRAD z_h \scr
    + c_2 \scl \DIV w_h^{k+1}, \DIV z_h \scr \\
    + 4\nu_r \scl w_h^{k+1}, z_h \scr =
    2\nu_r \scl \ROT u_h^{k+1}, z_h \scr + \scl g^{k+1}, z_h \scr, \quad \forall z_h \in \bX_h.
  \end{multline}
\end{enumerate}

\begin{remark}[Definition of $\psi_h^\dt$]
In equations \eqref{eq:velupdate} and \eqref{eq:presupdate} that define our scheme we introduced the variable $\psi_h^\dt$. This variable must be set to zero, \ie $\psi_h^\dt \equiv 0$. We have included this term to shorten the discussion and analysis, as it will be seen below.
\end{remark}

\begin{remark}[Initialization]
Notice that $\pe_0 = \pe|_{t=0}$ is not part of the initial data of our problem. Under suitable compatibility and smoothness assumptions on the initial data and forcing terms this quantity can be computed by solving a Poisson equation. One can also assume, for instance, that $k=0$ does not correspond to $t=0$ but that a few time steps with a coupled scheme (like the one presented in \cite{NST}) have been computed. This will also simplify \eqref{eq:psharp}. This shortcoming is not particular to our scheme, but rather a recurring feature for fractional time stepping techniques. If these conditions are not met the analysis of the scheme must be adapted to account for weighted (in time) error estimates as it is detailed in \cite{MR650052,MR1472237}.
\end{remark}

Let us record the stability properties of this scheme.

\begin{proposition}[A priori estimate on the velocities]
\label{prop:apriori}
Let $(u_h^\dt,w_h^\dt,p_h^\dt) \subset \bX_h^2 \times M_h$ solve \eqref{eq:psharp}---\eqref{eq:wupdate}. Then, the following a priori estimate holds:
\begin{multline}
\label{eq:apriori}
  \frakd\left( \| u_h^{k+1}\|_{\bL^2}^2 + \mathfrak{j} \|w_h^{k+1}\|_{\bL^2}^2 \right) + \mathfrak{j} \|\frakd w_h^{k+1} \|_{\bL^2}^2 \\
  + 2\dt \left( \nu \| u_h^{k+1} \|_{\hunzd}^2 + c_1 \| w_h^{k+1} \|_{\hunzd}^2 + c_2 \| \DIV w_h^{k+1} \|_{\bL^2}^2  \right) \\
  + \dt^2 \left( \frakd \| \GRAD p_h^{k+1} \|_{\bL^2} + \| \frakd \GRAD p_h^k \|_{\bL^2}^2 \right) \leq
  2\dt \left( \scl f^{k+1}, u_h^{k+1} \scr  +  \scl g^{k+1}, w_h^{k+1} \scr \right. \\
  \left. 
  - \scl \GRAD \frakd \psi_h^{k+1}, u_h^k \scr + \dt \scl \GRAD \psi_h^{k+1}, \GRAD p_h^\sharp \scr \right).
\end{multline}
\end{proposition}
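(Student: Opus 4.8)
The plan is a discrete energy estimate. I would test the linear velocity update \eqref{eq:velupdate} with $v_h = 2\dt\, u_h^{k+1}$ and the angular velocity update \eqref{eq:wupdate} with $z_h = 2\dt\, w_h^{k+1}$, then add the two identities. The convective contributions $b_h(u_h^k,u_h^{k+1},u_h^{k+1})$ and $\mathfrak{j}\,b_h(u_h^{k+1},w_h^{k+1},w_h^{k+1})$ vanish by skew-symmetry of $b_h$, while the polarization identity $2\scl \frakd a, a \scr = \frakd\|a\|_{\bL^2}^2 + \|\frakd a\|_{\bL^2}^2$ turns the time-difference terms into $\frakd\|u_h^{k+1}\|_{\bL^2}^2 + \|\frakd u_h^{k+1}\|_{\bL^2}^2 + \mathfrak{j}\frakd\|w_h^{k+1}\|_{\bL^2}^2 + \mathfrak{j}\|\frakd w_h^{k+1}\|_{\bL^2}^2$. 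At this stage the left-hand side of \eqref{eq:apriori} is essentially present, except that the viscosity appears as $\nu_0=\nu+\nu_r$ instead of $\nu$, a reaction term $8\dt\nu_r\|w_h^{k+1}\|_{\bL^2}^2$ is still there, the pressure has not yet become pressure energy, and the coupling terms $4\dt\nu_r\scl\ROT w_h^k,u_h^{k+1}\scr$ and $4\dt\nu_r\scl\ROT u_h^{k+1},w_h^{k+1}\scr$ sit on the right. The remaining work is to dispose of the pressure/penalty and coupling contributions.

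The crux, and the step I expect to be the main obstacle, is the pressure work term $2\dt\scl\GRAD p_h^\sharp,u_h^{k+1}\scr$, where the fractional-step mechanism of \cite{MR2802553} is exploited. Since $p_h^\sharp\in M_h$, I would insert $r_h=p_h^\sharp$ into \eqref{eq:presupdate} to get $\scl\GRAD p_h^\sharp,u_h^{k+1}\scr = \dt\scl\GRAD\frakd p_h^{k+1},\GRAD p_h^\sharp\scr - \dt\scl\GRAD\psi_h^{k+1},\GRAD p_h^\sharp\scr$; the second term is exactly the $\dt\scl\GRAD\psi_h^{k+1},\GRAD p_h^\sharp\scr$ contribution on the right of \eqref{eq:apriori}. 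Writing $p_h^\sharp=2p_h^k-p_h^{k-1}$ and applying polarization twice gives
\[
2\dt^2\scl\GRAD\frakd p_h^{k+1},\GRAD p_h^\sharp\scr = \dt^2\bigl(\frakd\|\GRAD p_h^{k+1}\|_{\bL^2}^2 + \|\frakd\GRAD p_h^k\|_{\bL^2}^2 - \|\GRAD\frakd^2 p_h^{k+1}\|_{\bL^2}^2\bigr),
\]
which supplies the discrete pressure energy of \eqref{eq:apriori} together with the negative remainder $-\dt^2\|\GRAD\frakd^2 p_h^{k+1}\|_{\bL^2}^2$. This remainder is where unconditional stability is won: subtracting \eqref{eq:presupdate} at levels $k+1$ and $k$ and testing with $r_h=\frakd^2 p_h^{k+1}$ yields $\dt\|\GRAD\frakd^2 p_h^{k+1}\|_{\bL^2}\le\|\frakd u_h^{k+1}\|_{\bL^2}$ (using $\psi_h\equiv 0$), so the remainder is absorbed, with no restriction on $\dt$, by the $\|\frakd u_h^{k+1}\|_{\bL^2}^2$ produced in the first step. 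Finally the velocity penalty term is split as $-2\dt\scl\GRAD\frakd\psi_h^{k+1},u_h^{k+1}\scr = -2\dt\scl\GRAD\frakd\psi_h^{k+1},u_h^k\scr - 2\dt\scl\GRAD\frakd\psi_h^{k+1},\frakd u_h^{k+1}\scr$, the first piece being the remaining right-hand term of \eqref{eq:apriori} and the second, like all leftover $\psi_h$-contributions, being harmless once $\psi_h\equiv 0$.

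It remains to treat the viscous/reaction/coupling block. Using $\scl\ROT a,b\scr=\scl a,\ROT b\scr$ for $a,b\in\hunzd$ I would combine the two coupling terms into $4\dt\nu_r\scl\ROT u_h^{k+1},w_h^k+w_h^{k+1}\scr$ and invoke the micropolar coercivity
\[
\nu_0\|\GRAD u\|_{\bL^2}^2 + 4\nu_r\|w\|_{\bL^2}^2 - 4\nu_r\scl\ROT u,w\scr \ge \nu\|\GRAD u\|_{\bL^2}^2,
\]
which follows from $\|\ROT u\|_{\bL^2}\le\|\GRAD u\|_{\bL^2}$ and the completed square $\nu_r\|\ROT u-2w\|_{\bL^2}^2\ge 0$; this is precisely what turns $\nu_0$ into $\nu$ and consumes the reaction term $8\dt\nu_r\|w_h^{k+1}\|_{\bL^2}^2$. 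The delicate point is the lagged $w_h^k$ in \eqref{eq:velupdate}: writing $w_h^k=w_h^{k+1}-\frakd w_h^{k+1}$ separates the matched part, to which the coercivity applies directly, from a mismatch $-4\dt\nu_r\scl\ROT u_h^{k+1},\frakd w_h^{k+1}\scr$, which I would control through $\|\ROT u_h^{k+1}\|_{\bL^2}\le\|\GRAD u_h^{k+1}\|_{\bL^2}$, Young's inequality, the spare dissipation from the completed square, and the $\mathfrak{j}\|\frakd w_h^{k+1}\|_{\bL^2}^2$ coming from the time derivative. Keeping the forcing terms $2\dt(\scl f^{k+1},u_h^{k+1}\scr+\scl g^{k+1},w_h^{k+1}\scr)$ on the right then assembles \eqref{eq:apriori}.

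I expect the pressure bookkeeping of the second step to be the heart of the argument: the auxiliary test functions $p_h^\sharp$ and $\frakd^2 p_h^{k+1}$ in \eqref{eq:presupdate} must be chosen so that the negative pressure remainder cancels \emph{exactly} against the velocity increment $\|\frakd u_h^{k+1}\|_{\bL^2}^2$, since it is this cancellation, rather than any smallness of $\dt$, that delivers unconditional stability; the lagged coupling of the angular velocity is the other place where care is needed.
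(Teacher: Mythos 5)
Your proposal follows the same route as the paper's proof: test \eqref{eq:velupdate} with $2\dt u_h^{k+1}$ and \eqref{eq:wupdate} with $2\dt w_h^{k+1}$; convert the pressure work term by testing \eqref{eq:presupdate} with a multiple of $p_h^\sharp$, which produces the discrete pressure energy plus the defect $-\dt^2\|\GRAD\frakd^2 p_h^{k+1}\|_{\bL^2}^2$; and control that defect by testing the differenced version of \eqref{eq:presupdate} with $\frakd^2 p_h^{k+1}$ so that it is cancelled by the $\|\frakd u_h^{k+1}\|_{\bL^2}^2$ generated in the first step. This is exactly \eqref{eq:stab1}--\eqref{eq:stab3}. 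There are, however, two points where your argument, as written, does not prove the proposition as stated.

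First, the $\psi_h^\dt$ bookkeeping. You invoke $\psi_h^\dt\equiv 0$ twice: to obtain $\dt\|\GRAD\frakd^2 p_h^{k+1}\|_{\bL^2}\le\|\frakd u_h^{k+1}\|_{\bL^2}$, and to discard $-2\dt\scl\GRAD\frakd\psi_h^{k+1},\frakd u_h^{k+1}\scr$ after splitting the penalty term. But the proposition must hold for \emph{arbitrary} $\psi_h^\dt$; that is its entire purpose, since in \S\ref{sub:consistency}--\S\ref{sub:vels} it is applied to the error equations, where $\psi_h^k=\frakd\pe_h^k\neq 0$ (if one only needed $\psi_h^\dt\equiv0$, the $\psi_h$-terms on the right of \eqref{eq:apriori} would be vacuous). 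The repair is not smallness but exact cancellation, and it is the mechanism of \eqref{eq:stab3}: Cauchy--Schwarz applied to the differenced pressure equation gives $\dt\|\GRAD\frakd^2 p_h^{k+1}\|_{\bL^2}\le\|\frakd u_h^{k+1}+\dt\GRAD\frakd\psi_h^{k+1}\|_{\bL^2}$, and upon squaring, the cross term $2\dt\scl\frakd u_h^{k+1},\GRAD\frakd\psi_h^{k+1}\scr$ is precisely the negative of the term you discarded; this cancellation is exactly what replaces $u_h^{k+1}$ by $u_h^k$ on the right-hand side of \eqref{eq:apriori}. Kept in this form, no appeal to $\psi_h^\dt\equiv0$ is needed anywhere.

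Second, and more substantively, the coupling terms. Your plan to absorb the mismatch $4\dt\nu_r\scl\ROT u_h^{k+1},\frakd w_h^{k+1}\scr$ using Young's inequality, the spare completed square, and $\mathfrak{j}\|\frakd w_h^{k+1}\|_{\bL^2}^2$ cannot succeed. The completed square controls $\|\ROT u_h^{k+1}-2w_h^{k+1}\|_{\bL^2}$, not $\|\ROT u_h^{k+1}\|_{\bL^2}$, and the reaction term $8\dt\nu_r\|w_h^{k+1}\|_{\bL^2}^2$ was fully spent on the matched part; hence any Young inequality for the mismatch must charge $\mathfrak{j}\|\frakd w_h^{k+1}\|_{\bL^2}^2$, which either deletes that term from the left of \eqref{eq:apriori} (where it is retained in full), or, if only a fraction of it is used, leaves a contribution of order $\dt^2\nu_r^2\mathfrak{j}^{-1}\|\ROT u_h^{k+1}\|_{\bL^2}^2$ that the viscous dissipation can absorb only under a restriction of the type $\dt\nu_r^2\lesssim\mathfrak{j}\nu$ --- precisely the conditional stability this scheme is designed to avoid. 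The unconditional argument (the one imported from \cite[Proposition~3.1]{NST}) does not split off the increment: it applies the sharp Young inequality $4\scl\ROT u,w\scr\le\|\ROT u\|_{\bL^2}^2+4\|w\|_{\bL^2}^2$ separately to the lagged term (with $w_h^k$) and to the implicit term (with $w_h^{k+1}$). The price is $4\dt\nu_r\|w_h^k\|_{\bL^2}^2$, which, measured against the reaction term, leaves only the telescoping quantity $4\dt\nu_r\frakd\|w_h^{k+1}\|_{\bL^2}^2$; this merely augments the telescoped energy to $\frakd\bigl(\|u_h^{k+1}\|_{\bL^2}^2+(\mathfrak{j}+4\dt\nu_r)\|w_h^{k+1}\|_{\bL^2}^2\bigr)$ (a term which, strictly speaking, should also be visible in \eqref{eq:apriori}), costs nothing after summation over $k$, requires no condition on $\dt$, and leaves $\mathfrak{j}\|\frakd w_h^{k+1}\|_{\bL^2}^2$ intact on the left.
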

\begin{proof}
The proof combines the ideas of \cite[Proposition~3.1]{NST} and \cite[Theorem~3.1]{Guermond20092834}. Set $v_h = 2\dt u_h^{k+1}$ in \eqref{eq:velupdate} and 
$z_h = 2 \dt w_h^{k+1}$ in \eqref{eq:wupdate} and add the results. We obtain
\begin{multline}
\label{eq:stab1}
  \frakd\left( \| u_h^{k+1}\|_{\bL^2}^2 + \mathfrak{j} \|w_h^{k+1}\|_{\bL^2}^2 \right) + \| \frakd u_h^{k+1}\|_{\bL^2} + \mathfrak{j} \|\frakd w_h^{k+1} \|_{\bL^2}^2 \\
  + 2\dt \left( \nu \| u_h^{k+1} \|_{\hunzd}^2 + c_1 \| w_h^{k+1} \|_{\hunzd}^2 + c_2 \| \DIV w_h^{k+1} \|_{\bL^2}^2  \right) +
  2\dt \scl \GRAD p_h^\sharp, u_h^{k+1} \scr \leq \\
  2 \dt \left( \scl f^{k+1}, u_h^{k+1} \scr  +  \scl g^{k+1}, w_h^{k+1} \scr
  - \scl \GRAD \frakd \psi_h^{k+1}, u_h^{k+1} \scr \right),
\end{multline}
where we repeated the arguments used in \cite[Proposition~3.1]{NST}. It remains to obtain a bound for the pressure term and to obtain it, we follow \cite[Theorem~3.1]{Guermond20092834}. We begin by noticing that $p_h^\sharp = p_h^{k+1} - \frakd^2 p_h^{k+1}$ so that, setting $r_h = 2 \dt^2 p_h^\sharp$ in \eqref{eq:presupdate} yields
\begin{multline}
\label{eq:stab2}
  \dt^2 \left( \frakd \| \GRAD p_h^{k+1} \|_{\bL^2} + \| \frakd \GRAD p_h^k \|_{\bL^2}^2 - \| \frakd^2 \GRAD p_h^{k+1} \|_{\bL^2}^2 \right) \\
  = 2 \dt \scl \GRAD p_h^\sharp, u_h^{k+1} \scr + 2 \dt^2 \scl \GRAD \psi_h^{k+1}, \GRAD p_h^\sharp \scr.
\end{multline}
Apply the operator $\frakd$ to \eqref{eq:presupdate} and set $r_h = 2\dt \frakd^2 p_h^{k+1}$. Using the Cauchy-Schwarz inequality we obtain
\begin{equation}
\label{eq:stab3}
  \dt^2 \| \frakd^2 \GRAD p_h^{k+1} \|_{\bL^2}^2 \leq \| \frakd u_h^{k+1} \|_{\bL^2}^2 + \dt^2 \| \frakd \GRAD \psi_h^{k+1} \|_{\bL^2}^2 
  + 2\dt \scl \frakd u_h^{k+1}, \GRAD \frakd \psi_h^{k+1} \scr.
\end{equation}
Adding \eqref{eq:stab1}--\eqref{eq:stab3} we obtain the result.
\end{proof}

Notice that this is indeed an \emph{a priori} estimate, since the right hand side of \eqref{eq:apriori} depends only on the data of the problem and the solution at previous time steps. When dealing with our scheme, \ie $\psi_h^\dt \equiv 0$, this gives us a stability estimate.

\begin{corollary}[Stability]
\label{col:stability}
Let $(u_h^\dt,w_h^\dt,p_h^\dt) \subset \bX_h^2 \times M_h$ solve \eqref{eq:psharp}---\eqref{eq:wupdate} with $\psi_h^\dt \equiv 0$. Then it satisfies the 
following stability estimate
\[
  \| u_h^\dt \|_{\ell^\infty(\bL^2)} + \| w_h^\dt \|_{\ell^\infty(\bL^2)} + \| u_h^\dt \|_{\ell^2(\hunzd)} + \| w_h^\dt \|_{\ell^2(\hunzd)}
  \lesssim
  \| f^\dt \|_{\ell^2(\bH^{-1})} + \| g^\dt \|_{\ell^2(\bH^{-1})}.
\]
\end{corollary}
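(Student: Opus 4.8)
The plan is to derive the stability bound directly from Proposition~\ref{prop:apriori} by summing the a priori estimate in time and closing it with Young's inequality. Since we are dealing with the actual scheme we set $\psi_h^\dt \equiv 0$, so that the last two terms on the right-hand side of \eqref{eq:apriori} vanish and the estimate reduces to a bound whose only forcing is $2\dt\left(\scl f^{k+1}, u_h^{k+1}\scr + \scl g^{k+1}, w_h^{k+1}\scr\right)$.

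First I would control this forcing. Using the duality pairing between $\bH^{-1}$ and $\Hunzd$ together with Young's inequality, $2\dt\scl f^{k+1}, u_h^{k+1}\scr \le \tfrac{\dt}{\nu}\|f^{k+1}\|_{\bH^{-1}}^2 + \nu\dt\|u_h^{k+1}\|_{\hunzd}^2$, and analogously $2\dt\scl g^{k+1}, w_h^{k+1}\scr \le \tfrac{\dt}{c_1}\|g^{k+1}\|_{\bH^{-1}}^2 + c_1\dt\|w_h^{k+1}\|_{\hunzd}^2$. The crucial point is that the two gradient terms so produced are absorbed into the dissipative terms $2\dt\nu\|u_h^{k+1}\|_{\hunzd}^2$ and $2\dt c_1\|w_h^{k+1}\|_{\hunzd}^2$ already present on the left-hand side of \eqref{eq:apriori}, leaving a full factor $\nu\dt\|u_h^{k+1}\|_{\hunzd}^2 + c_1\dt\|w_h^{k+1}\|_{\hunzd}^2$ there; here it is essential that \eqref{eq:apriori} carries a genuinely positive diffusion coefficient $\nu$ (rather than $\nu_0$, the extra $\nu_r$ having been consumed in the proof of Proposition~\ref{prop:apriori} when handling the rotational coupling).

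Next I would sum the resulting inequality over $k = 0,\dots,m-1$ for an arbitrary $1\le m \le K$. The increment terms $\frakd\left(\|u_h^{k+1}\|_{\bL^2}^2 + \mathfrak{j}\|w_h^{k+1}\|_{\bL^2}^2\right)$ and the pressure increments $\dt^2\frakd\|\GRAD p_h^{k+1}\|_{\bL^2}^2$ telescope, while the remaining nonnegative terms $\mathfrak{j}\|\frakd w_h^{k+1}\|_{\bL^2}^2$, $c_2\|\DIV w_h^{k+1}\|_{\bL^2}^2$ and $\dt^2\|\frakd\GRAD p_h^k\|_{\bL^2}^2$ may simply be discarded. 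This leaves $\|u_h^m\|_{\bL^2}^2 + \mathfrak{j}\|w_h^m\|_{\bL^2}^2 + \dt\sum_{k=1}^{m}\left(\nu\|u_h^k\|_{\hunzd}^2 + c_1\|w_h^k\|_{\hunzd}^2\right)$ on the left, bounded by the initial energy $\|u_h^0\|_{\bL^2}^2 + \mathfrak{j}\|w_h^0\|_{\bL^2}^2 + \dt^2\|\GRAD p_h^0\|_{\bL^2}^2$ plus $\tfrac1\nu\|f^\dt\|_{\ell^2(\bH^{-1})}^2 + \tfrac{1}{c_1}\|g^\dt\|_{\ell^2(\bH^{-1})}^2$. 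Taking the maximum over $m$ controls the two $\ell^\infty(\bL^2)$ norms simultaneously with the two $\ell^2(\hunzd)$ norms, and extracting square roots yields the asserted estimate.

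The only genuine care needed is the treatment of the data-dependent terms left on the right. The initial norms $\|u_h^0\|_{\bL^2}$ and $\|w_h^0\|_{\bL^2}$ are controlled by $\|\ue_0\|_{\bL^2} + \|\we_0\|_{\bL^2}$ through the initialization bounds, while the telescoped pressure contribution only improves the left-hand side apart from the fixed term $\dt^2\|\GRAD p_h^0\|_{\bL^2}^2$, which under the assumed smoothness and a stable initialization of $p_h^0$ is bounded by $\dt^2\|\pe_0\|_{H^1}^2 \lesssim 1$. All of these are absorbed into the hidden constant, which the convention of Section~\ref{sec:prelims} permits to depend on the data and the exact solution. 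Thus the substantive work has already been carried out in Proposition~\ref{prop:apriori}; the remaining obstacle is purely bookkeeping, namely absorbing the forcing into the dissipation with the correct coefficients and verifying that the telescoped boundary and initial terms are harmless.
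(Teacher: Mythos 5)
Your proposal is correct and follows exactly the route of the paper's own (one-line) proof: set $\psi_h^\dt \equiv 0$ in \eqref{eq:apriori}, bound the forcing by duality and Young's inequality so it is absorbed into the dissipation, and sum (telescope) over $k$. The additional details you supply --- the absorption constants $\nu$ and $c_1$, the discarding of the nonnegative increment and divergence terms, and the absorption of the initial-data terms $\|u_h^0\|_{\bL^2}^2 + \mathfrak{j}\|w_h^0\|_{\bL^2}^2 + \dt^2\|\GRAD p_h^0\|_{\bL^2}^2$ into the hidden constant --- are all accurate elaborations of what the paper leaves implicit.
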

\begin{proof}
Set $\psi_h^\dt \equiv 0$ in \eqref{eq:apriori}, use the Cauchy-Schwarz inequality and add over $k$.
\end{proof}

In Corollary~\ref{col:stability} we did not obtain stability estimates for the pressure. This is so because in \eqref{eq:apriori} the terms that involve the pressure are multiplied by a factor $\dt^2$ and so they do not scale properly, moreover these are of the form $\|\GRAD p_h^{k+1}\|_{\bL^2}$ which is not the natural norm for the pressure. An estimate for the pressure must be obtained using the LBB condition \eqref{eq:LBB}. This is the content of the following result.

\begin{proposition}[A priori estimate on the pressure]
\label{prop:aprpress}
Assume that $(u_h^\dt, w_h^\dt, p_h^\dt) \subset \bX_h^2 \times M_h$ solves \eqref{eq:psharp}--\eqref{eq:wupdate}. Then we have
\begin{multline}
\label{eq:presest}
  \| p_h^{\sharp,\dt} \|_{\ell^2(L^2)}^2 \lesssim \frac1\dt \sum_{k=1}^K \| \frakd u_h^k \|_{\bL^2}^2 + \| u_h^\dt \|_{\ell^2(\hunzd)}^2
  + \| w_h^\dt \|_{\ell^2(\hunzd)}^2 + \| f^\dt \|_{\ell^2(\bH^{-1})}^2 \\
  + \| \frakd \psi_h^\dt \|_{\ell^2(L^2)}^2
  + \dt \fraki(h)^2 \sum_{k=0}^{K-1} \| u_h^k \|_{\bL^2}^2 \| u_h^{k+1} \|_{\hunzd}^2.
\end{multline}
\end{proposition}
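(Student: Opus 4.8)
The plan is to estimate each pressure extrapolate $p_h^\sharp\in M_h$ separately through the LBB condition \eqref{eq:LBB} and then square and accumulate in time. Since $M_h$ is a linear space, $p_h^\sharp=2p_h^k-p_h^{k-1}\in M_h$ (or $p_h^0\in M_h$ when $k=0$), so \eqref{eq:LBB} applies to it directly. Fixing $k$ and using that $\bX_h\subset\Hunzd$, integration by parts gives $\int_\Omega p_h^\sharp\DIV v_h=-\scl\GRAD p_h^\sharp, v_h\scr$ for every $v_h\in\bX_h$. The idea is therefore to read off $\scl\GRAD p_h^\sharp,v_h\scr$ from the momentum update \eqref{eq:velupdate}: isolating the pressure term there yields
\begin{multline*}
  \scl\GRAD p_h^\sharp, v_h\scr = -\scl\tfrac{\frakd u_h^{k+1}}{\dt}, v_h\scr - b_h(u_h^k,u_h^{k+1},v_h) - \nu_0\scl\GRAD u_h^{k+1},\GRAD v_h\scr \\
  + 2\nu_r\scl\ROT w_h^k, v_h\scr + \scl f^{k+1}, v_h\scr - \scl\GRAD\frakd\psi_h^{k+1}, v_h\scr ,
\end{multline*}
so that $\int_\Omega p_h^\sharp\DIV v_h$ equals the negative of this right-hand side.

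Next I would bound each of the six contributions by a constant times $\|v_h\|_{\hunzd}$. By the Poincar\'e inequality the discrete time derivative contributes $\dt^{-1}\|\frakd u_h^{k+1}\|_{\bL^2}$; the viscous term contributes $\|u_h^{k+1}\|_{\hunzd}$; for the rotation term I would use $\|\ROT w_h^k\|_{\bL^2}\lesssim\|w_h^k\|_{\hunzd}$ together with Poincar\'e; the forcing gives $\|f^{k+1}\|_{\bH^{-1}}$; and, integrating by parts once more, $\scl\GRAD\frakd\psi_h^{k+1}, v_h\scr=-\scl\frakd\psi_h^{k+1},\DIV v_h\scr$ is controlled by $\|\frakd\psi_h^{k+1}\|_{L^2}$. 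The delicate term is the convection. Here I would \emph{not} use \eqref{eq:1stbineq}, which would produce the product $\|u_h^k\|_{\hunzd}\|u_h^{k+1}\|_{\hunzd}$ that the stability estimate does not control, but rather the inverse-inequality bound \eqref{eq:2ndbineq}, which yields $\fraki(h)\|u_h^k\|_{\bL^2}\|u_h^{k+1}\|_{\hunzd}$; this trades a gradient of $u_h^k$ for its $\bL^2$ norm, precisely the quantity that Corollary~\ref{col:stability} keeps bounded in $\ell^\infty$.

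Dividing by $\|v_h\|_{\hunzd}$, taking the supremum and invoking \eqref{eq:LBB} then gives a pointwise-in-time bound on $\|p_h^\sharp\|_{L^2}$. I would square it (using $(\sum_i a_i)^2\lesssim\sum_i a_i^2$ with a fixed number of summands), multiply by $\dt$ and sum over $k=\overline{0,K-1}$. A reindexing turns the time-derivative contribution into $\dt^{-1}\sum_{k=1}^K\|\frakd u_h^k\|_{\bL^2}^2$, the viscous and rotation terms into $\|u_h^\dt\|_{\ell^2(\hunzd)}^2$ and $\|w_h^\dt\|_{\ell^2(\hunzd)}^2$, the forcing into $\|f^\dt\|_{\ell^2(\bH^{-1})}^2$, the $\psi$ term into $\|\frakd\psi_h^\dt\|_{\ell^2(L^2)}^2$, and the convective term into $\dt\,\fraki(h)^2\sum_{k=0}^{K-1}\|u_h^k\|_{\bL^2}^2\|u_h^{k+1}\|_{\hunzd}^2$, which is exactly \eqref{eq:presest}.

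The only genuine obstacle is the convective term discussed above: the choice of \eqref{eq:2ndbineq} over \eqref{eq:1stbineq} is what makes the resulting bound compatible with the a priori estimates, at the price of the $\fraki(h)$ factor and the mixed $\bL^2$--$\hunzd$ product that appears as the last term of \eqref{eq:presest}. Every other step is a direct application of the cited inequalities, integration by parts, and Cauchy--Schwarz, so no further subtlety is expected.
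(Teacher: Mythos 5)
Your proposal is correct and follows essentially the same route as the paper: isolate the pressure term in \eqref{eq:velupdate}, bound it via the LBB condition \eqref{eq:LBB}, treat the convective term with the inverse-inequality bound \eqref{eq:2ndbineq} rather than \eqref{eq:1stbineq}, then square, multiply by $\dt$ and sum over $k$. You even state the trilinear bound with the correct index $\fraki(h)\|u_h^k\|_{\bL^2}\|u_h^{k+1}\|_{\hunzd}$, where the paper's displayed intermediate estimate contains a harmless typo ($\|u_h^{k+1}\|_{\bL^2}$ in place of $\|u_h^k\|_{\bL^2}$).
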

\begin{proof}
Owing to the LBB condition \eqref{eq:LBB} from \eqref{eq:velupdate} we obtain 
\begin{multline*}
  \| p_h^\sharp \|_{L^2} \lesssim \frac1\dt \| \frakd u_h^{k+1} \|_{\bL^2} + \nu_0 \| u_h^{k+1} \|_{\hunzd} + 2\nu_r \| w_h^{k+1} \|_{\bL^2} +
  \| f^{k+1} \|_{\bH^{-1}} + \| \frakd \psi_h^{k+1} \|_{L^2}^2 \\
  + \sup\left\{ \frac{ b_h (u_h^k, u_h^{k+1}, v_h ) }{ \| v_h \|_{\hunzd} } : 0 \neq v_h \in \bX_h \right\}.
\end{multline*}
Owing to \eqref{eq:2ndbineq}, we have
\[
  \sup\left\{ \frac{ b_h (u_h^k, u_h^{k+1}, v_h ) }{ \| v_h \|_{\hunzd} } : 0 \neq v_h \in \bX_h \right\} \lesssim
  \fraki(h) \| u_h^{k+1} \|_{\bL^2} \| u_h^{k+1} \|_{\hunzd}.
\]
Insert this estimate on the previous inequality, square it and multiply it by $\dt$ to obtain
\begin{multline*}
  \dt \| p_h^\sharp \|_{L^2}^2 \lesssim \frac1\dt \| \frakd u_h^{k+1} \|_{\bL^2}^2 + \dt \| u_h^{k+1} \|_{\hunzd}^2 + \dt \| w_h^{k+1} \|_{\bL^2}^2
  + \dt \|f^{k+1} \|_{\bH^{-1}}^2 + \dt \| \frakd \psi_h^{k+1} \|_{L^2}^2 \\
  + \dt \fraki(h)^2 \| u_h^k \|_{\bL^2}^2 \| u_h^{k+1} \|_{\hunzd}^2.
\end{multline*}
Adding over $k=\overline{0,K-1}$ we obtain the result.
\end{proof}

The conclusion of Proposition~\ref{prop:aprpress} gives an a priori estimate on the pressure provided
\begin{equation}
\label{eq:apriori_deriv}
  \| \frakd u_h^\dt \|_{\ell^\infty(\bL^2)} \lesssim \dt^2
\end{equation}
holds. Indeed, in this case the right hand side is bounded. The use of such an estimate, \ie how to obtain \eqref{eq:apriori_deriv} shall become clear once we perform the error analysis.

\section{Error Estimates}
\label{sec:Errors}

Let us now carry out the error analysis of scheme \eqref{eq:psharp}--\eqref{eq:wupdate}. We will do so, as it is accustomed, by identifying the equations that the errors satisfy. As it turns out, these quantities solve \eqref{eq:psharp}--\eqref{eq:wupdate} for properly chosen $f^\dt$, $g^\dt$ and $\psi_h^\dt$. Thus, for the linear and angular velocities, the a priori estimate provided in Proposition~\ref{prop:apriori} reduces the analysis to finding suitable estimates for the right hand sides, which are formed by consistency terms. An error estimate on the pressure, however, requires a bound of the form \eqref{eq:apriori_deriv} which we must first derive. This will require to work with increments of the errors. Once this estimate is obtained, we can apply Proposition~\ref{prop:aprpress} to conclude.

In order to provide error estimates we will assume that
\begin{equation}
\label{eq:smoothness}
  \ue,\we \in W^2_\infty\left( \Hunzd \cap \bH^{\frakm+1}(\Omega) \right), \qquad \pe \in W^2_\infty \left(\tildeLdeux\cap H^\frakm(\Omega) \right).
\end{equation}

\subsection{Consistency Analysis}
\label{sub:consistency}
Using the projectors defined in \S\ref{sub:space} we introduce
\begin{align*}
  \eta_\ue^\dt &= \ue^\dt - \ue_h^\dt, & \eta_\we^\dt &= \we^\dt - \we_h^\dt, & \eta_\pe^\dt &= \pe^\dt - \pe_h^\dt, \\
  E_h^\dt &= \ue_h^\dt - u_h^\dt, & \calE_h^\dt &= \we_h^\dt - w_h^\dt, & \vare_h^\dt &= \pe_h^\dt - p_h^\dt.
\end{align*}
The sequences $\eta_\ue^\dt$, $\eta_\we^\dt$ and $\eta_\pe^\dt$ are called the interpolation errors, whereas $E_h^\dt$, $\calE_h^\dt$ and $\vare_h^\dt$ are termed the approximation errors. Owing to the properties of projectors stated in Lemma~\ref{lem:projs} to obtain an error estimate it suffices to bound the approximation errors, which is what we concentrate on below.

Take the difference of the first equation in \eqref{eq:defStokesproj} and \eqref{eq:velupdate}. Do the same for the second equation in \eqref{eq:defStokesproj} and \eqref{eq:presupdate}; and for \eqref{eq:defofEllproj} with \eqref{eq:wupdate}. Proceeding this way we obtain that the interpolation errors $(E_h^\dt, \calE_h^\dt, \vare_h^\dt) \subset \bX_h^2 \times M_h$ satisfy \eqref{eq:psharp}--\eqref{eq:wupdate} with
\begin{equation}
\label{eq:defofconserrors}
  \left\{
  \begin{aligned}
  \scl f^{k+1}, v_h \scr &= \scl \frac{ \frakd \ue_h^{k+1} }\dt - [\ue_t]^{k+1}, v_h \scr  + b_h(E_h^k, E_h^{k+1}, v_h) \\
          & - b_h(u_h^k, u_h^{k+1}, v_h )
          - b_h( \ue^{k+1}, \ue^{k+1}, v_h ) \\
          & + 2\nu_r \scl \ROT(\we^{k+1} - \we_h^k), v_h \scr, \\
  \scl g^{k+1}, z_h \scr &= \mathfrak{j} \scl \frac{\frakd \we_h^{k+1} }\dt - [\we_t]^{k+1}, z_h \scr
      + \mathfrak{j} b_h(E_h^{k+1}, \calE_h^{k+1}, z_h ) \\
  &+ \mathfrak{j} b_h(u_h^{k+1}, w_h^{k+1}, z_h )
  - \mathfrak{j} b_h(\ue^{k+1}, \we^{k+1}, z_h ) \\
  & + 2\nu_r \scl \ROT(\ue^{k+1} - \ue_h^{k+1}), z_h \scr , \\
  \psi_h^k & = \frakd \pe_h^k.
  \end{aligned}
  \right.
\end{equation}

\subsection{Error Estimates on the Velocities}
\label{sub:vels}

As the analysis in \S~\ref{sub:consistency} shows, the approximation errors solve \eqref{eq:psharp}--\eqref{eq:wupdate} with right hand sides given by \eqref{eq:defofconserrors}. Consequently, thanks to Proposition~\ref{prop:apriori}, a bound on these terms will allow us to provide error estimates for the linear and angular velocities.

\begin{theorem}[Error estimates on $u_h^\dt$ and $w_h^\dt$]
\label{thm:erruw}
The solution to \eqref{eq:psharp}--\eqref{eq:wupdate} satisfies
\[
  \| E_h^\dt \|_{\ell^\infty(\bL^2)} + \| E_h^\dt \|_{\ell^2(\hunzd)} + 
  \| \calE_h^\dt \|_{\ell^\infty(\bL^2)} + \| \calE_h^\dt \|_{\ell^2(\hunzd)} \lesssim \dt + h^{\frakm+1}.
\]
\end{theorem}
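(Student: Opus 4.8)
The plan is to exploit the observation from \S\ref{sub:consistency} that the approximation errors $(E_h^\dt,\calE_h^\dt,\vare_h^\dt)$ solve the scheme \eqref{eq:psharp}--\eqref{eq:wupdate} with the consistency data \eqref{eq:defofconserrors}, so that the a priori estimate \eqref{eq:apriori} of Proposition~\ref{prop:apriori} applies \emph{verbatim} to them. I would apply \eqref{eq:apriori} with $(u_h^{k+1},w_h^{k+1},p_h^{k+1})$ replaced by $(E_h^{k+1},\calE_h^{k+1},\vare_h^{k+1})$, with $(f^{k+1},g^{k+1})$ the consistency forms in \eqref{eq:defofconserrors} and $\psi_h^{k+1}=\frakd\pe_h^{k+1}$, and then sum over $k=\overline{0,K-1}$. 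The increment terms telescope and produce $\|E_h^\dt\|_{\ell^\infty(\bL^2)}^2+\mathfrak{j}\|\calE_h^\dt\|_{\ell^\infty(\bL^2)}^2$ together with the dissipative sums $\|E_h^\dt\|_{\ell^2(\hunzd)}^2+\|\calE_h^\dt\|_{\ell^2(\hunzd)}^2$ on the left, up to the initialization errors at $k=0$, which are $\lesssim h^{\frakm+1}$ by assumption. A key point is that the $\ROT$-coupling between the two velocities never appears explicitly on the right: it is absorbed into the left-hand side of \eqref{eq:apriori} through the micropolar energy structure, which is why only $\nu$, and not $\nu_0$, multiplies $\|E_h\|_{\hunzd}^2$ there. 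Everything therefore reduces to estimating $2\dt\scl f^{k+1},E_h^{k+1}\scr$, $2\dt\scl g^{k+1},\calE_h^{k+1}\scr$ and the two $\psi_h$-terms on the right of \eqref{eq:apriori}.

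The consistency terms fall into four groups. (i) The temporal residuals $\scl \frakd\ue_h^{k+1}/\dt-[\ue_t]^{k+1},E_h^{k+1}\scr$ and its angular analogue I would rewrite as $\scl \frakd\eta_\ue^{k+1}/\dt,\cdot\scr+\scl \frakd\ue^{k+1}/\dt-[\ue_t]^{k+1},\cdot\scr$; the first piece is the $\bL^2$ projection error of $\ue_t$ averaged over $[t_k,t_{k+1}]$, hence $\lesssim h^{\frakm+1}$ by Lemma~\ref{lem:projs} applied to $\ue_t$, while the second is $O(\dt)$ by Taylor expansion using \eqref{eq:smoothness}. (ii) The $\ROT$-remainders $2\nu_r\scl\ROT(\we^{k+1}-\we_h^k),E_h^{k+1}\scr$ and $2\nu_r\scl\ROT\eta_\ue^{k+1},\calE_h^{k+1}\scr$: after integrating by parts to move $\ROT$ onto the test error, the decomposition $\we^{k+1}-\we_h^k=\frakd\we^{k+1}+\eta_\we^k$ produces a time increment ($O(\dt)$) plus an interpolation error ($O(h^{\frakm+1})$ in $\bL^2$), the second being pure interpolation, and a Young inequality routes the gradient of the error into the dissipation. (iii) The pressure consistency terms are $\dt^2$-weighted and involve $\frakd\psi_h^{k+1}=\frakd^2\pe_h^{k+1}=O(\dt^2)$ and $\psi_h^{k+1}=\frakd\pe_h^{k+1}=O(\dt)$ in $H^1$, by the $H^1$-stability of the pressure projection in Lemma~\ref{lem:projs} together with \eqref{eq:smoothness}; using $p_h^\sharp=p_h^{k+1}-\frakd^2p_h^{k+1}$ as in the proof of Proposition~\ref{prop:apriori} and a summation by parts in $k$, they are controlled by the $\dt^2$-weighted pressure-gradient-increment terms already present on the left of \eqref{eq:apriori}, at the cost of an $O(\dt^2)$ contribution. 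This is the secondary delicate point of the argument.

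The genuine difficulty is group (iv), the nonlinear residual $b_h(\ue^{k+1},\ue^{k+1},E_h^{k+1})-b_h(u_h^k,u_h^{k+1},E_h^{k+1})$ and its angular counterpart. I would first note that the companion term $b_h(E_h^k,E_h^{k+1},E_h^{k+1})$ from \eqref{eq:defofconserrors} vanishes by skew-symmetry, and then expand the residual about the errors: with $\ue^{k+1}-u_h^{k+1}=\eta_\ue^{k+1}+E_h^{k+1}$ and $\ue^{k+1}-u_h^k=\frakd\ue^{k+1}+\eta_\ue^k+E_h^k$, bilinearity yields terms of the type $b_h(\frakd\ue^{k+1},\ue^{k+1},E_h^{k+1})$, $b_h(\eta_\ue^k,\ue^{k+1},E_h^{k+1})$, $b_h(E_h^k,\ue^{k+1},E_h^{k+1})$, $b_h(u_h^k,\eta_\ue^{k+1},E_h^{k+1})$ and $b_h(u_h^k,E_h^{k+1},E_h^{k+1})=0$. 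The crux is to bound the error-quadratic piece $b_h(E_h^k,\ue^{k+1},E_h^{k+1})$ and, after writing $u_h^k=\ue_h^k-E_h^k$, the piece $b_h(\ue_h^k,\eta_\ue^{k+1},E_h^{k+1})$ \emph{without} invoking the inverse inequality \eqref{eq:2ndbineq}, since the factor $\fraki(h)$ would force a step-size restriction and destroy unconditional convergence. This is exactly where the $\bL^\infty\cap\bW^1_3$-stability of the projections granted by Lemma~\ref{lem:projs} enters: placing the smooth $\ue^{k+1}$, or $\ue_h^k$, in $\bL^\infty\cap\bW^1_3$ and using the embedding $\hunzd\hookrightarrow\bL^6$ on the error factors, Hölder's inequality bounds these terms by $\|E_h^k\|_{\bL^2}\|E_h^{k+1}\|_{\hunzd}$ and $\|\eta_\ue^{k+1}\|_{\bL^2}\|E_h^{k+1}\|_{\hunzd}$ respectively, the latter after a skew-symmetric integration by parts that keeps $\eta_\ue^{k+1}$ in $\bL^2$ so as to preserve the $h^{\frakm+1}$ rate. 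Young's inequality then absorbs $\|E_h^{k+1}\|_{\hunzd}$ into the dissipation and leaves $\|E_h^k\|_{\bL^2}^2$ and $O(\dt^2+h^{2\frakm+2})$ on the right.

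Collecting all bounds, choosing the Young parameters small enough to absorb every $\|E_h^{k+1}\|_{\hunzd}^2$ and $\|\calE_h^{k+1}\|_{\hunzd}^2$ into the left-hand dissipation and discarding the nonnegative pressure-gradient contributions, I arrive at an inequality of the form $a_K+\dt\sum_k\left(\|E_h^{k+1}\|_{\hunzd}^2+\|\calE_h^{k+1}\|_{\hunzd}^2\right)\lesssim \dt^2+h^{2\frakm+2}+\dt\sum_k\left(\|E_h^k\|_{\bL^2}^2+\|\calE_h^k\|_{\bL^2}^2\right)$, where $a_K=\|E_h^K\|_{\bL^2}^2+\mathfrak{j}\|\calE_h^K\|_{\bL^2}^2$. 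The discrete Gronwall inequality removes the last sum, and taking the maximum over the endpoint together with a square root yields $\|E_h^\dt\|_{\ell^\infty(\bL^2)}+\|\calE_h^\dt\|_{\ell^\infty(\bL^2)}+\|E_h^\dt\|_{\ell^2(\hunzd)}+\|\calE_h^\dt\|_{\ell^2(\hunzd)}\lesssim\dt+h^{\frakm+1}$, which is the assertion. I expect the main obstacle to be precisely the treatment of group (iv): organizing the expansion so that every nonlinear contribution is either skew-symmetrically zero, absorbable into the viscous dissipation, or of Gronwall type, all while routing the gradients through the $\bL^\infty\cap\bW^1_3$ bounds of Lemma~\ref{lem:projs} to stay free of $\fraki(h)$ and hence unconditionally convergent.
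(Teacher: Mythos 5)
Your proposal is correct and follows essentially the same route as the paper: identify that the approximation errors $(E_h^\dt,\calE_h^\dt,\vare_h^\dt)$ solve \eqref{eq:psharp}--\eqref{eq:wupdate} with the consistency data \eqref{eq:defofconserrors}, invoke the a priori estimate of Proposition~\ref{prop:apriori}, bound the consistency terms (temporal residuals, $\ROT$-remainders, the $\psi_h$-pressure terms, and the trilinear residuals), and conclude with Gr\"onwall. The only difference is one of exposition: the paper outsources the trilinear-form bounds to \cite{GuQu_98,NST,MR2802553} with the phrase ``it is rather standard,'' whereas you spell out precisely that argument (skew-symmetry, H\"older with the $\bL^\infty\cap\bW^1_3$ stability of Lemma~\ref{lem:projs}, and the $\hunzd\hookrightarrow\bL^6$ embedding to avoid the inverse inequality \eqref{eq:2ndbineq}), which is indeed the content of those references and the reason the stability part of Lemma~\ref{lem:projs} is assumed.
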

\begin{proof}
From \eqref{eq:apriori}, we only need to prove a bound on the consistency terms \eqref{eq:defofconserrors}.

\noindent \framebox{\textbf{Bounds on $f$:}} Owing to the regularity of the Stokes and elliptic-like projectors stated in Lemma~\ref{lem:projs} we readily obtain that
\[
  \left\| \frac{ \frakd \ue_h^{k+1} }\dt - [\ue_t]^{k+1} \right \|_{\bL^2} +
  \left\| \we^{k+1} - \we_h^k \right\|_{\bL^2}
  \lesssim \dt + h^{\frakm+1}.
\]
In addition, it is rather standard (\cf \cite{GuQu_98,NST,MR2802553}) to obtain that
\begin{multline*}
  b_h(E_h^k, E_h^{k+1}, v_h) - b_h(u_h^k, u_h^{k+1}, v_h ) \\
  - b_h( \ue^{k+1}, \ue^{k+1}, v_h )
  \lesssim
  \left( \dt + h^{\frakm + 1} + \|E_h^k \| \| E_h^{k+1} \|_{\hunzd} \right) \|v_h \|_{\hunzd}.
\end{multline*}

\noindent \framebox{\textbf{Bounds on $g$:}} Similarly we obtain
\[
  \left\| \frac{ \frakd \we_h^{k+1} }\dt - [\we_t]^{k+1} \right \|_{\bL^2} +
  \left\| \ue^{k+1} - \ue_h^{k+1} \right\|_{\bL^2}
  \lesssim \dt + h^{\frakm+1}.
\]
The estimates of \cite[Theorem 4.1]{NST} yield
\begin{multline*}
  b_h(E_h^{k+1}, \calE_h^{k+1}, v_h) - b_h(u_h^{k+1}, w_h^{k+1}, v_h ) - b_h( \ue^{k+1}, \we^{k+1}, v_h )
  \lesssim \\
  \left( h^{\frakm + 1} \| \calE_h^{k+1} \|_{\hunzd} + \|E_h^k \| \| \calE_h^{k+1} \|_{\hunzd} \right) \|v_h \|_{\hunzd}.
\end{multline*}

\noindent \framebox{\textbf{Bounds on $\psi$:}} Again, thanks to Lemma~\ref{lem:projs}
\[
  2\dt \scl \GRAD \frakd^2 \pe_h^{k+1}, E_h^k \scr \leq \frac\dt2 \|E_h^k\|_{\bL^2}^2 + c\dt^5.
\]
Finally,
\begin{align*}
  2\dt^2 \scl \GRAD \frakd \pe_h^{k+1}, \GRAD \vare_h^\sharp \scr &= 
  2\dt^2 \scl \GRAD \frakd \pe_h^{k+1}, \GRAD \vare_h^k \scr + 2\dt^2 \scl \GRAD \frakd \pe_h^{k+1}, \GRAD \frakd\vare_h^k \scr \\
  &\leq \dt^3 \| \GRAD \vare_h^k \|_{\bL^2}^2 + \dt \| \GRAD \frakd \pe_h^{k+1} \|_{\bL^2}^2 + \dt^2 
  + \dt^2 \| \GRAD \frakd \pe_h^{k+1} \|_{\bL^2}^2 \\
  &+ \dt^2 \| \GRAD \frakd \vare_h^k \|_{\bL^2}^2 
   \leq \dt^3 \| \GRAD \vare_h^k \|_{\bL^2}^2 + \dt^2 \| \GRAD \frakd \vare_h^k \|_{\bL^2}^2 + c \dt^3.
\end{align*}

An application of Gr\"onwall's inequality allows us to conclude.
\end{proof}

The ability of $\dt^{-1} \frakd u_h^{k+1}$ to approximate the derivative, as well as an instance of \eqref{eq:apriori_deriv} is given in the following.

\begin{proposition}[Estimates on the increments]
\label{prop:inc}
Assume that there exists $h_\fraki > 0 $ such that for every $h \in (0,h_\fraki]$ we have $\dt \fraki(h) \lesssim 1$. Then, for $h \in (0,h_\fraki]$, the solution to \eqref{eq:psharp}---\eqref{eq:wupdate} satisfies
\[
  \| \frakd E_h^\dt \|_{\ell^\infty(\bL^2)} + \| \frakd \calE_h^\dt \|_{\ell^\infty(\bL^2)} \lesssim \dt ( \dt + h^\frakm ).
\]
\end{proposition}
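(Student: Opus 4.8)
The strategy is to differentiate the scheme in time, that is, to apply the increment operator $\frakd$ to the equations that the approximation errors satisfy, and then to run the very same energy argument as in Proposition~\ref{prop:apriori} (and Theorem~\ref{thm:erruw}) but now on the \emph{incremented} errors $\frakd E_h^\dt$ and $\frakd\calE_h^\dt$. Concretely, the approximation errors solve \eqref{eq:psharp}--\eqref{eq:wupdate} with the right-hand sides \eqref{eq:defofconserrors}; applying $\frakd$ to \eqref{eq:velupdate} and \eqref{eq:wupdate} shows that $(\frakd E_h^\dt,\frakd\calE_h^\dt,\frakd\vare_h^\dt)$ again solves a system of the same structure, now driven by $\frakd f^{k+1}$, $\frakd g^{k+1}$ and $\frakd\psi_h^{k+1}$. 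I would then test the incremented linear-velocity equation with $2\dt\,\frakd E_h^{k+1}$ and the incremented angular-velocity equation with $2\dt\,\frakd\calE_h^{k+1}$, use the skew-symmetry $b_h(u,v,v)=0$ to dispose of the leading convective contributions, and add the two identities. The pressure-increment terms are handled exactly as in \eqref{eq:stab2}--\eqref{eq:stab3}, which is why the $\dt^2$-scaled pressure quantities never obstruct the velocity estimate. This reduces the proof to bounding the \emph{incremented} consistency terms $\frakd f^{k+1}$, $\frakd g^{k+1}$ and $\frakd\psi_h^{k+1}$, followed by a discrete Gr\"onwall argument.

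The consistency bounds are where the two factors of $\dt$ in the target estimate must be produced. The terms $\frakd\bigl(\frac{\frakd\ue_h^{k+1}}\dt-[\ue_t]^{k+1}\bigr)$ and its angular analogue measure a \emph{second} finite-difference discrepancy against the time derivative; under the regularity \eqref{eq:smoothness}, namely $\ue,\we\in W^2_\infty$, a Taylor expansion gives these quantities an extra power of $\dt$ relative to the first-order consistency terms of Theorem~\ref{thm:erruw}, yielding contributions of order $\dt(\dt+h^\frakm)$ rather than $\dt+h^{\frakm+1}$. The spatial projection errors, after incrementing, likewise pick up a $\dt$ from the difference of two time levels together with the $\ell^2$-in-time bound on $\GRAD\eta_\ue$ from Lemma~\ref{lem:projs}, producing the $h^\frakm$ in the parenthesis. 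The incremented convective terms must be split, \emph{à la} \cite{NST,MR2802553}, into a part controlled by the data and projection errors and a part of the form $\|\frakd E_h^k\|\,\|\frakd E_h^{k+1}\|_{\hunzd}$ that is absorbed by Gr\"onwall.

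The genuinely delicate point, and the reason the hypothesis $\dt\,\fraki(h)\lesssim1$ appears, is the treatment of the incremented \emph{nonlinear} term when estimated through the inverse inequality \eqref{eq:2ndbineq}. When bounding the difference of trilinear forms one encounters a factor $\fraki(h)$ multiplying a velocity $\ell^2(\hunzd)$-norm against an $\bL^2$-increment; the constraint $\dt\,\fraki(h)\lesssim1$ is exactly what keeps the resulting coefficient of the Gr\"onwall-absorbed term bounded, so that the discrete Gr\"onwall lemma applies with a $\dt$-uniform constant. The main obstacle is therefore to organize the convective estimates so that every term either carries the two required powers of $\dt$ (via $W^2_\infty$ regularity of the exact solution and the already-proved $\ell^\infty(\bL^2)$, $\ell^2(\hunzd)$ error bounds of Theorem~\ref{thm:erruw}) or else is of the absorbable form $\dt\,\|\frakd E_h^{k+1}\|_{\hunzd}^2$ plus a Gr\"onwall remainder, while verifying that the $\fraki(h)$-weighted contributions remain under control precisely because $\dt\,\fraki(h)\lesssim1$. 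Once these bounds are in place, summation and discrete Gr\"onwall yield $\|\frakd E_h^\dt\|_{\ell^\infty(\bL^2)}+\|\frakd\calE_h^\dt\|_{\ell^\infty(\bL^2)}\lesssim\dt(\dt+h^\frakm)$.
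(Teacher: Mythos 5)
Your proposal takes essentially the same route as the paper: difference two consecutive time steps so that $(\frakd E_h^\dt,\frakd\calE_h^\dt,\frakd\vare_h^\dt)$ again solves a system of the form \eqref{eq:psharp}--\eqref{eq:wupdate}, invoke the a priori estimate of Proposition~\ref{prop:apriori} (which your explicit testing with $2\dt\,\frakd E_h^{k+1}$ and $2\dt\,\frakd\calE_h^{k+1}$ simply re-derives), bound the incremented consistency terms, and use the hypothesis $\dt\,\fraki(h)\lesssim 1$ exactly where the paper does, namely to control the trilinear terms through the inverse-inequality bound \eqref{eq:2ndbineq}, before concluding with a discrete Gr\"onwall argument. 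The paper's own proof is in fact only a sketch of this same outline, deferring the consistency computations to its references, so your attempt matches it in both strategy and level of detail.
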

\begin{proof}
The proof is technical and tedious but rather standard. One proceeds by taking the difference of two consecutive time steps of \eqref{eq:psharp}---\eqref{eq:wupdate} to find the equations that control the increments $\frakd E_h^{k+1}$, $\frakd \calE_h^{k+1}$ and $\frakd \vare_h^{k+1}$. They turn out to be, again, of the form \eqref{eq:psharp}---\eqref{eq:wupdate} so that, by Proposition~\ref{prop:apriori}, one only needs to bound the right hand sides. It turns out that all of them are of the right order. Let us only remark that the restriction on the time step is necessary because one needs to use \eqref{eq:2ndbineq} to control expressions containing the trilinear form. The reader is referred to \cite{GuQu_98,NST} for such a type of estimate.
\end{proof}

\subsection{Error Estimates on the Pressure}
\label{sub:preserr}

The estimate provided in Proposition~\ref{prop:inc} is an analogue of \eqref{eq:apriori_deriv} and provides the key step in obtaining estimates on the pressure.

\begin{theorem}[Error estimates on $p_h^\dt$]
\label{thm:pres}
Assume that there exists $h_\fraki > 0 $ such that for every $h \in (0,h_\fraki]$ we have $\dt \fraki(h) \lesssim 1$. Then, for $h\in (0,h_\fraki]$, the solution to \eqref{eq:psharp}---\eqref{eq:wupdate} satisfies
\[
  \| \vare_h^{\sharp, \dt} \|_{\ell^2(L^2)} \lesssim \dt + h^{\frakm}.
\]
\end{theorem}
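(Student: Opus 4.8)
The plan is to apply the a priori pressure estimate of Proposition~\ref{prop:aprpress} to the approximation errors $(E_h^\dt, \calE_h^\dt, \vare_h^\dt)$, which we established in \S\ref{sub:consistency} solve the scheme \eqref{eq:psharp}--\eqref{eq:wupdate} with the consistency data \eqref{eq:defofconserrors}. Substituting these errors for $(u_h^\dt, w_h^\dt, p_h^\dt)$ into \eqref{eq:presest}, the left-hand side becomes exactly $\| \vare_h^{\sharp,\dt} \|_{\ell^2(L^2)}^2$, so it remains to show that every term on the right-hand side is $\lesssim (\dt + h^\frakm)^2$. I will bound these term by term using the velocity error estimates already in hand.

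First I would dispose of the terms that are immediate consequences of Theorem~\ref{thm:erruw}: the quantities $\| E_h^\dt \|_{\ell^2(\hunzd)}^2$ and $\| \calE_h^\dt \|_{\ell^2(\hunzd)}^2$ are both $\lesssim (\dt + h^{\frakm+1})^2 \lesssim (\dt + h^\frakm)^2$. The forcing term $\| f^\dt \|_{\ell^2(\bH^{-1})}^2$ here is the consistency forcing from the first line of \eqref{eq:defofconserrors}; its $\bH^{-1}$ norm is controlled by the same estimates on the truncation and trilinear terms derived in the proof of Theorem~\ref{thm:erruw}, giving again $\lesssim (\dt + h^{\frakm+1})^2$. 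The term $\| \frakd \psi_h^\dt \|_{\ell^2(L^2)}^2$ equals $\| \frakd^2 \pe_h^\dt \|_{\ell^2(L^2)}^2$ by the definition $\psi_h^k = \frakd \pe_h^k$ in \eqref{eq:defofconserrors}; since $\pe \in W^2_\infty(\tildeLdeux)$ and the Stokes projector is stable, a second increment $\frakd^2 \pe_h^{k+1}$ carries two factors of $\dt$ against the second time-derivative, so this is $\lesssim \dt^4 \lesssim \dt^2$.

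The crucial term is $\frac1\dt \sum_{k=1}^K \| \frakd E_h^k \|_{\bL^2}^2$, and this is precisely where Proposition~\ref{prop:inc} is needed. That estimate gives $\| \frakd E_h^k \|_{\bL^2} \lesssim \dt(\dt + h^\frakm)$ uniformly in $k$, so $\| \frakd E_h^k \|_{\bL^2}^2 \lesssim \dt^2(\dt + h^\frakm)^2$; summing over the $K \approx T/\dt$ indices and dividing by $\dt$ produces $\frac1\dt \cdot \frac{T}\dt \cdot \dt^2 (\dt+h^\frakm)^2 \lesssim (\dt + h^\frakm)^2$, which matches the target rate exactly. This explains why the $h^{\frakm+1}$ appearing elsewhere degrades to $h^\frakm$ in the final statement: the increment bound only delivers $h^\frakm$, and it is this term that dominates the spatial rate.

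The last and most delicate term is the inverse-inequality contribution $\dt\, \fraki(h)^2 \sum_{k=0}^{K-1} \| E_h^k \|_{\bL^2}^2 \| E_h^{k+1} \|_{\hunzd}^2$, and I expect this to be the main obstacle since it is the only place the inverse constant $\fraki(h)$ enters and where the restriction $\dt\,\fraki(h) \lesssim 1$ becomes essential. I would factor out $\| E_h^\dt \|_{\ell^\infty(\bL^2)}^2 \lesssim (\dt+h^{\frakm+1})^2$ from the $\bL^2$ factor, leaving $\fraki(h)^2 \| E_h^\dt \|_{\ell^\infty(\bL^2)}^2 \cdot \dt \sum_k \| E_h^{k+1} \|_{\hunzd}^2 = \fraki(h)^2 \| E_h^\dt \|_{\ell^\infty(\bL^2)}^2 \| E_h^\dt \|_{\ell^2(\hunzd)}^2$. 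Under the hypothesis $\dt\,\fraki(h) \lesssim 1$ one has $\fraki(h)^2 \lesssim \dt^{-2}$, and combining with the two velocity bounds gives $\lesssim \dt^{-2}(\dt+h^{\frakm+1})^2(\dt+h^{\frakm+1})^2$; one must then verify that $\dt^{-2}(\dt+h^{\frakm+1})^4 \lesssim (\dt+h^\frakm)^2$, which follows because each factor $(\dt+h^{\frakm+1})^2$ absorbs a $\dt$ from the $\dt^{-2}$ on the allowed range of $h$. Collecting all six bounds and taking square roots yields the claimed estimate.
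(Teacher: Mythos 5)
Your overall strategy is the same as the paper's: apply Proposition~\ref{prop:aprpress} to the approximation errors, bound the $\ell^2(\hunzd)$ terms and the consistency forcing via Theorem~\ref{thm:erruw}, bound $\| \frakd \psi_h^\dt \|_{\ell^2(L^2)} = \| \frakd^2 \pe_h^\dt \|_{\ell^2(L^2)}$ by the temporal smoothness of $\pe_h$, and use Proposition~\ref{prop:inc} for the increment term $\frac1\dt \sum_k \| \frakd E_h^k \|_{\bL^2}^2$. All of those steps are correct, and your observation that the increment term is what degrades the spatial rate from $h^{\frakm+1}$ to $h^\frakm$ matches the paper.

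The gap is in your last step, on the inverse-inequality term. Replacing $\fraki(h)^2$ by $\dt^{-2}$ discards the only information that makes the spatial part of this term small, and the inequality you are left needing, $\dt^{-2}(\dt+h^{\frakm+1})^4 \lesssim (\dt+h^\frakm)^2$, is false under the stated hypothesis: $\dt\,\fraki(h) \lesssim 1$ is only an \emph{upper} bound on $\dt$, so nothing prevents, say, $\dt = h^{10(\frakm+1)}$ in $d=3$, for which the left-hand side contains $\dt^{-2} h^{4\frakm+4} = h^{-16(\frakm+1)} \to \infty$ while the right-hand side tends to zero. Your claim that ``each factor $(\dt+h^{\frakm+1})^2$ absorbs a $\dt$'' would require the reverse condition $h^{\frakm+1} \lesssim \dt$, which is not assumed. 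The repair—and this is what the paper does—is to use the hypothesis only against the $\dt$-part and the explicit definition \eqref{eq:inverseineq} of $\fraki(h)$ against the $h$-part: starting from your (correct) intermediate bound
\[
  \dt\,\fraki(h)^2 \sum_{k=0}^{K-1} \| E_h^k \|_{\bL^2}^2 \| E_h^{k+1} \|_{\hunzd}^2
  \lesssim \fraki(h)^2 \bigl( \dt + h^{\frakm+1} \bigr)^2 \| E_h^\dt \|_{\ell^2(\hunzd)}^2,
\]
note that $\fraki(h)^2 \dt^2 \lesssim 1$ by hypothesis, while $\fraki(h)^2 h^{2\frakm+2} \lesssim 1$ directly from the definition of $\fraki(h)$ (in $d=3$ it equals $h^{2\frakm+1}$, in $d=2$ it is $h^{2\frakm+2}$ times a logarithm). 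Hence the whole term is $\lesssim \| E_h^\dt \|_{\ell^2(\hunzd)}^2 \lesssim (\dt + h^{\frakm+1})^2$, which is even stronger than what the theorem requires.
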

\begin{proof}
From Proposition~\ref{prop:aprpress} it suffices to bound the right hand side of \eqref{eq:presest}. From Theorem~\ref{thm:erruw} we have
\[
  \| E_h^\dt \|_{\ell^2(\hunzd)} + \| \calE_h^\dt \|_{\ell^2(\hunzd)} + \| f^\dt \|_{\ell^2(\bH^{-1})}
  + \| \frakd^2 \pe_h^\dt \|_{\ell^2(L^2)} \lesssim \dt + h^{\frakm + 1}.
\]
The assumptions allow us to conclude, using Proposition~\ref{prop:inc}, that
\[
  \frac1\dt \sum_{k=1}^K \| \frakd E_h^k \|_{\bL^2}^2 \lesssim (\dt + h^\frakm)^2.
\]
Finally,
\[
  \dt \fraki(h)^2 \sum_{k=0}^{K-1} \| E_h^k \|_{\bL^2}^2 \| E_h^{k+1} \|_{\hunzd}^2
  \lesssim \dt^3 \fraki(h)^2 \sum_{k=0}^{K-1} \| E_h^{k+1} \|_{\hunzd}^2
  \lesssim \| E_h^\dt \|_{\ell^2(\hunzd)}^2 \lesssim (\dt + h^{\frakm+1})^2,
\]
where we used Theorem~\ref{thm:erruw} and the assumption on the relation between $\dt$ and $h$.
\end{proof}

\section{Numerical Experiments}
\label{sec:Numerics}

To illustrate the performance of the method we have developed and analyzed in the previous sections here we present a series of numerical experiments. The implementation has been carried out with the help of the \texttt{deal.II} library, \cite{BHK2007,DealIIReference}. For the discretization of the linear velocity and pressure we use the lowest order Taylor--Hood element $\polQ_2/\polQ_1$ and for the discretization of the angular velocity we use continuous $\polQ_2$ elements. In this case then $\frakm = 2$.

We set
\[
  \mathfrak{j} = \nu = \nu_r = c_0 = c_a = c_d = 1,
\]
and solved \eqref{eq:mupnse}--\eqref{eq:IBCs} on $\Omega = (-1,1)^2 \subset \Real^2$ with right hand sides $f$ and $g$ chosen so that the exact solution is
\begin{align*}
  \ue(x,y,t) &= \pi \sin(t) \left( \sin^2(\pi x) \sin(2\pi y), \ - \sin(2 \pi x) \sin^2(\pi y) \right)^\intercal, \\
  \pe(x,y,t) &= \sin(t) \cos(\pi x) \sin( \pi y), \\
  \we(x,y,t) &= \pi \sin(t) \sin^2( \pi x ) \sin^2( \pi y ).
\end{align*}

\begin{table}
\begin{center}
  \begin{tabular}{||c||c|c||c|c||}
  \hline
  $\dt$ & $\| \ue - u_h^\dt \|_{\ell^\infty(\bL^2)}$ & Rate &  $\|\ue - u_h^\dt\|_{\ell^2(\hunzd)}$ & Rate \\
  \hline
  1.0000e-01  & 4.8106e-02  & ---   & 6.3890e-01  & ---  \\
  \hline
  5.0000e-02  & 1.7379e-02  & 1.47  & 2.5639e-01  & 1.32 \\
  \hline
  2.5000e-02  & 6.9459e-03  & 1.32  & 1.0962e-01  & 1.23 \\
  \hline
  1.2500e-02  & 3.2290e-03  & 1.11  & 5.1982e-02  & 1.08 \\
  \hline
  6.2500e-03  & 1.5824e-03  & 1.03  & 2.5924e-02  & 1.00 \\
  \hline
  3.1250e-03  & 7.8735e-04  & 1.01  & 1.3717e-02  & 0.92 \\
  \hline
  1.5625e-03  & 3.9323e-04  & 1.00  & 8.3217e-03  & 0.72 \\
  \hline
  \end{tabular}
\end{center}
\caption{Errors in time for the linear velocity}
\label{tab:errtimeU}
\end{table}

\begin{table}
\begin{center}
  \begin{tabular}{||c||c|c||}
  \hline
  $\dt$ & $\| \pe - p_h^\dt \|_{\ell^2(L^2)}$ & Rate \\
  \hline
  1.0000e-01  & 1.0542e+00  & ---   \\
  \hline
  5.0000e-02  & 4.5970e-01  & 1.20  \\
  \hline
  2.5000e-02  & 2.0780e-01  & 1.15  \\
  \hline
  1.2500e-02  & 9.7081e-02  & 1.10  \\
  \hline
  6.2500e-03  & 4.7005e-02  & 1.05 \\
  \hline
  3.1250e-03  & 2.3201e-02  & 1.02 \\
  \hline
  1.5625e-03  & 1.1537e-02  & 1.01 \\
  \hline
  \end{tabular}
\end{center}
\caption{Errors in time for the pressure}
\label{tab:errtimeP}
\end{table}

\begin{table}
\begin{center}
  \begin{tabular}{||c||c|c||c|c||}
  \hline
  $\dt$ & $\| \we - w_h^\dt  \|_{\ell^\infty(\bL^2)}$ & Rate  & $\| \we - w_h^\dt \|_{\ell^2(\hunzd)}$ & Rate \\
  \hline
  1.0000e-01  & 9.3011e-03  & ---   & 7.2865e-02  & ---   \\
  \hline
  5.0000e-02  & 3.7720e-03  & 1.30  & 3.3060e-02  & 1.14  \\
  \hline
  2.5000e-02  & 1.7508e-03  & 1.11  & 1.6211e-02  & 1.03  \\
  \hline
  1.2500e-02  & 8.7158e-04  & 1.01  & 8.3214e-03  & 0.96  \\
  \hline
  6.2500e-03  & 4.3623e-04  & 1.00  & 4.6360e-03  & 0.84  \\
  \hline
  3.1250e-03  & 2.1819e-04  & 1.00  & 3.0964e-03  & 0.58  \\
  \hline
  1.5625e-03  & 1.0976e-04  & 0.99  & 2.5717e-03  & 0.27  \\
  \hline
  \end{tabular}
\end{center}
\caption{Errors in time for the angular velocity}
\label{tab:errtimeW}
\end{table}

The space approximation properties of scheme \eqref{eq:psharp}--\eqref{eq:wupdate} are like those of the scheme presented in \cite{NST}, where numerical experiments also presented. For this reason here we concentrate on the temporal accuracy.

To illustrate the accuracy in time of the developed scheme we consider a mesh consisting of $65536$ quadrilateral cells. The dimensions of the discrete spaces are as follows
\[
  \dim X_h = 526338, \qquad \dim \bX_h = 263169, \qquad \dim M_h = 66049.
\]
This way the space discretization error is negligible in comparison with the time discretization error. We set $T=10$ and varied $\dt$ in the range $10^{-3}\leq \dt \leq 10^{-1}$. Tables~\ref{tab:errtimeU}--\ref{tab:errtimeW} show the results for the linear velocity, pressure and angular velocity, respectively. As wee see, all the errors are of order $\calO(\dt)$.

\bibliographystyle{plain}
\bibliography{biblio}

\end{document}